\definecolor{shadecolor}{rgb}{1,0.8,0.3}
\title{\textbf{Adjusting SPRT for an Efficient Procedure with Finite Number of Applications of Less Effective Treatment}}
\date{}
\author[1]{Sampurna Kundu\footnote{Corresponding Author. \href{sampurna.kundu58@gmail.com}{sampurna.kundu58@gmail.com}}}
\author[2]{Jayant Jha\footnote{\href{jayantjha@gmail.com}{jayantjha@gmail.com}}}
\author[3]{Subir Kumar Bhandari\footnote{\href{subirkumar.bhandari@gmail.com}{subirkumar.bhandari@gmail.com}}}
\affil{\footnotesize Interdisciplinary Statistical Research Unit, Indian Statistical Institute, Kolkata, India}
\begin{document}

\maketitle
\theoremstyle{definition}
\newtheorem{axiom}{Axiom}
\newtheorem{Remark}{Remark}
\newtheorem{corollary}{Corollary}[section]
\newtheorem{claim}[axiom]{Claim}
\newtheorem{theorem}{Theorem}[section]
\newtheorem{lemma}{Lemma}[section]
\newtheorem{test}{Test Procedure}
\newtheorem{proposition}{Proposition}

\newaliascnt{lemmaa}{theorem}
\newtheorem{lemmaa}[lemmaa]{Theorem}
\aliascntresetthe{lemmaa}
\providecommand*{\lemmaautorefname}{Lemma}
\providecommand*{\corollaryautorefname}{Corollary}
\providecommand*{\testautorefname}{Test Procedure}
\providecommand*{\theoremautorefname}{Theorem}
\providecommand*{\propositionautorefname}{Proposition}
\providecommand*{\remarkautorefname}{Remark}



\theoremstyle{plain}
\newtheorem{exa}{Example}
\newtheorem{rem}{Remark}

\theoremstyle{definition}
\newtheorem{definition}{Definition}
\newtheorem{example}{Example}

\begin{abstract}

We propose an adaptive Sequential Probability Ratio Test (SPRT) which allocates a finite number of applications to the less effective treatment. In the classical SPRT framework, patients are assigned to the two competing treatments one-by-one until the stopping criterion, based on breaching the boundary values which are pre-determined using the Type-I and Type-II error probabilities, is met. This ensures the control of errors at the cost of ethical efficiency as the exposure to the less effective treatment is large. We begin with proposing an adaptive sequential framework for testing two simple hypotheses that analytically ensures finite exposure to the less effective treatment. Our proposed procedure employs a likelihood ratio–driven adaptive allocation rule, dynamically concentrating sampling effort on the superior population while preserving asymptotic efficiency (in terms of average sample number), comparable to the classical SPRT. We derive an explicit closed-form expression for the expected number of allocations to the inferior treatment. Extensive simulation studies and real data analyses substantiate the theoretical results, evincing a significant reduction in inferior allocations compared to the classical SPRT. The proposed design thus offers a balanced method between statistical precision and ethical responsibility, aligning inferential reliability with patient safety.
\end{abstract}



Keywords. Adaptive allocation; Adaptive sequential design;  Average sample number; Number of applications of inferior treatment; Probability of correct selection; Sequential probability ratio test.

\section{Introduction}

Sequential design, first formalized by \cite{Wald1947} through the Sequential Probability Ratio Test (SPRT henceforth), is a cornerstone of modern statistical decision theory. It departs from fixed-sample schemes by allowing data to be analyzed as they are collected, and the decision to continue or stop sampling depends on accumulated information. The principal goal is to achieve efficient inference — minimizing the expected or average sample number ($ASN$) for given Type-I and Type-II error probabilities. This framework has since evolved into a variety of methodologies used in industrial quality control (\citealp{Teoh2022, Li2024}), reliability testing (\citealp{Rasay2022, Jain1994}), and clinical trials (\citealp{LiX2012, Martens2024}), where resources are limited and early stopping can save both cost and ethical burden. In the classical sequential paradigm, the experimenter observes an equal number of samples from each of two or more populations at each stage and uses likelihood ratios computed from the collected data to update inference until a stopping criterion is satisfied.

However, adaptive sequential designs allocate experimental units (or observations) to competing treatments or populations sequentially, using information accrued so far to guide future allocation. At each stage, the probability of selecting a population depends on its current estimated superiority. Adaptive sequential design embodies the dual objectives — achieving precise inference while safeguarding subjects from excessive exposure to inferior treatments, which has driven a large literature spanning bandit problems, response-adaptive randomization, and sequential testing (see \citealp{Berry85, Friedman2010, Ivanova2000, Rosenberger2001}). Typical approaches emphasize either optimality of inference (\citealp{WaldandWolfitz1948}) or desirable allocation properties (\citealp{HuandRosenberger2006}), but rarely provide closed-form control of the total use of the inferior option.

The central problem in adaptive allocation lies in balancing statistical optimality and ethical considerations. Ideally, an adaptive procedure should satisfy the following criteria:
\begin{itemize}
    \item It should asymptotically allocate a higher proportion of samples to the better population and the number of samples allocated to the inferior population should remain small or, if possible, finite in expectation and higher moments.
    \item It should retain inferential efficiency comparable to the SPRT, ensuring that Type-I and Type-II error probabilities remain controlled.
\end{itemize}

Most existing methods satisfy only part of these goals. Urn and play-the-winner–type (\citealp{Zelen1969, WeiandDurham1978}) schemes tend to concentrate sampling on the superior treatment as trials progress, but typically the absolute number of allocations to the inferior treatment continues to grow with the total sample size and becomes infinite eventually. Moreover, very few of these studies provide analytical expressions for the number of inferior allocations or connect their procedures to formal inferential properties such as $ASN$ or error control.

A number of studies have examined aspects of adaptive allocation and ethical sequential sampling. \citet{PhaseIII2008} provided a comprehensive treatment of response-adaptive designs in clinical trials, highlighting the ethical dimension of minimizing allocations to inferior treatments. Recent contributions by \citet{Das2023}, \citet{Biswas2020}, \citet{Bandyopadhyay2020}, \citet{DasR2024}, and \citet{DasS2024} have proposed new adaptive rules incorporating covariate adjustments, ordinal responses, crossover trials, multi-treatment response adaptive design, misclassifications, and adaptive interim decisions, all designed to enhance ethical allocation properties.

Earlier studies explored related versions of the two-treatment adaptive problem, but with important limitations. \citet{Bhandari2007} considered the case with known face value of the parameters, indicating that under a specific adaptive procedure, the expected number of allocations to the less effective treatment could be finite, although no closed-form expression or rigorous distributional analysis was provided, and inferential aspects such as the $ASN$ or probability of correct selection ($PCS$ henceforth) were not addressed. \citet{Bhandari2009} extended the problem to unknown parameters and obtained that the expected number of inferior allocations grows logarithmically with the total sample size, but the study did not link the allocation mechanism with inferential efficiency. More recently, \citet{Kundu2025} revisited the problem in an adaptive sequential context, proving that the number of inferior allocations is a finite random variable with finite moments. However, their proof relied on a subset of the sample space corresponding to correct selection events and did not yield a general closed-form expression, while the procedure’s inferential efficiency was only qualitatively assessed but not comparable with the SPRT framework.

Building on that theoretical foundation, the present paper addresses the above gap by developing a new adaptive sequential procedure for two-sample/simple-hypotheses testing problem and employs a likelihood ratio–driven adaptive rule that determines at each stage which population to sample next, based on cumulative log-likelihood comparisons of the data collected so far. The method ensures that sampling effort is increasingly concentrated on the better-performing population, thereby simultaneously achieving statistical efficiency and ethical prudence. Secondly, and most importantly, we derive an explicit analytical expression for the expected number of allocations to the less effective treatment (valid for large sample regimes) and we prove that this count is a finite random variable with all moments finite. While the existence of a finite bound was hinted at in our earlier work \citep{Kundu2025}, no closed-form expression had been obtained. In the current framework, using the asymptotic behavior of cumulative likelihood ratios and the distributional properties of their standardized sums, we elucidate that this expected number converges to a finite value that depends on the mean and variance of the underlying log-likelihood ratio statistics. This formula provides a quantifiable measure of ethical efficiency, representing the expected finite number of applications of the less effective treatment. Based on the evaluation of the log-likelihood ratio statistic at each step, we develop an adaptive SPRT procedure and demonstrate that it retains the asymptotic efficiency of the classical SPRT in terms of $ASN$. This ensures that the proposed method achieves ethical efficiency with little loss in statistical power. Furthermore, the exact expression for the less effective treatment is derived in the adaptive SPRT framework. Simulation studies corroborate the theoretical findings and indicate that the empirical results closely match the derived expressions across a range of parameter settings.

The remainder of this paper is organized as follows. Section \ref{Section 2} introduces the formal preliminaries and describes the proposed adaptive allocation rule (Method $\mathcal{M}$). Section \ref{Section 3} explicates the main theoretical results, including the proof of finiteness and the derivation of the closed-form expression for the number of inferior allocations, along with the efficiency comparison with the SPRT. Section \ref{Section 4} assesses the results of comprehensive simulation studies under various parameter choices and distributions. The practical utility of the proposed procedure is further illustrated through a real data analysis presented in Section \ref{Section 5}. Finally, Section \ref{Section 6} concludes the paper.

\section{Preliminaries} \label{Section 2}

Let $\mathcal{X} = X_{1}, X_{2}, X_{3}, \ldots$ and $\mathcal{Y} = Y_{1}, Y_{2}, Y_{3}, \ldots$ two independent data streams be generated. $X_{i}$ and $Y_{i}$ have densities from $\{f_{0}, f_{1}\}$ with respect to some $\sigma$-finite measure. It is not known which $f_{j} \,(f_{0} \text{ or } f_{1})$ is assigned with $X_{i}$ or $Y_{i}$.

\subsection{Adaptive Sequential Procedure}

We start with one sample each from $\mathcal{X}$ and $\mathcal{Y}$. At the step $n$, using past data, we use the method $\mathcal{M}$ to select the population from which to collect a sample next. At step $n$, let we have $N_{0, n}$ and $N_{1, n}$ samples from $\mathcal{X}$ and $\mathcal{Y}$ respectively, with $N_{0, n} + N_{1, n} = n$.
Let us define the following:
$$n_{max} = \max \{N_{0, n}, N_{1, n}\}\quad \text{and} \quad n_{min} = \min \{N_{0, n}, N_{1, n}\}.$$

When $N_{0, n} = N_{1, n}$, $n_{max}$ is chosen as either $N_{0, n}$ or $N_{1, n}$ with probability $\frac{1}{2}$ each, and $n_{min}$ is defined to be the remaining one.

Consider two simple null and alternative hypotheses:
$H_{0}: (f_{0}, f_{1})$ vs $H_{1}: (f_{1}, f_{0})$, where the first coordinate represents density that corresponds to $\mathcal{X}$.

\begin{lemma} \label{Lemma 2.1}
    At step $n$, the samples collected from $\mathcal{X}$ is $(X_{1}, X_{2}, \ldots, X_{N_{0, n}})$ and the samples collected from $\mathcal{Y}$ is $(Y_{1}, Y_{2}, \ldots, Y_{N_{1, n}})$. $n$ samples together conditioned by $(N_{0, n}, N_{1, n})$ are independent and conditional distribution of $X_{1}, X_{2}, \ldots, X_{N_{0, n}}$ is i.i.d. and that of $Y_{1}, Y_{2}, \ldots, Y_{N_{1, n}}$ is also i.i.d.
\end{lemma}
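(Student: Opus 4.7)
The plan is to proceed by induction on the step index $n$, exploiting two structural features of the setup: (i) the streams $\mathcal{X}$ and $\mathcal{Y}$ are themselves i.i.d.\ sequences that are mutually independent, and (ii) the selection rule $\mathcal{M}$ at any stage is measurable with respect to the $\sigma$-field generated by observations already in hand, so it never peeks at future members of either stream. The base case $n=2$ is immediate, since one observation has been drawn from each population. For the inductive step, assume the claim holds at stage $n$. At stage $n+1$, $\mathcal{M}$ inspects $(X_{1},\ldots,X_{N_{0,n}},Y_{1},\ldots,Y_{N_{1,n}})$ and selects a population; on the event that $\mathcal{X}$ is chosen, the new data point is the next untouched element $X_{N_{0,n}+1}$ of the $\mathcal{X}$-stream, which by construction is independent of all previously observed $X$'s and $Y$'s and has the correct marginal. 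The symmetric statement holds when $\mathcal{Y}$ is chosen.

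To promote this into the conditional statement requested, fix a realisation $(k,n+1-k)$ of $(N_{0,n+1},N_{1,n+1})$ and a bounded measurable $g$. The event $\{N_{0,n+1}=k\}$ is determined by the sequence of selection indicators $I_{1},\ldots,I_{n+1}$, each $I_{i}$ being a measurable function of $(X_{1},\ldots,X_{N_{0,i-1}},Y_{1},\ldots,Y_{N_{1,i-1}})$. Evaluating
$$E\bigl[g(X_{1},\ldots,X_{k},Y_{1},\ldots,Y_{n+1-k})\,\mathbf{1}_{\{N_{0,n+1}=k\}}\bigr]$$
by iterated conditioning that peels off one selection at a time, and invoking at every stage the independence of the freshly drawn observation from the full past, one factors the expectation as
$$E\bigl[g(X_{1},\ldots,X_{k},Y_{1},\ldots,Y_{n+1-k})\bigr]\,P(N_{0,n+1}=k),$$
which is precisely the statement that, conditional on $(N_{0,n+1},N_{1,n+1})$, the collected $X$-samples are i.i.d., the collected $Y$-samples are i.i.d., and the two groups are independent.

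The main obstacle is conceptual rather than computational: one must keep straight the distinction between the \emph{selection} bias introduced by $\mathcal{M}$ (which governs how often each stream is queried, and is manifestly data-dependent) and any potential \emph{value} bias within a stream (which would destroy the i.i.d.\ structure). The resolution is that at every stage the procedure reads the next, never-before-observed element of the chosen stream, so no value-level selection ever occurs; once this is pinned down, the induction becomes a routine bookkeeping argument and no further structural input about $f_{0}$ or $f_{1}$ is needed.
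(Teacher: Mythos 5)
Your proposal is essentially the paper's own argument: the paper decomposes the conditioning event over allocation paths, asserts that given any path the collected values retain the product density $f(X_1)\cdots f(X_{N_{0,n}})$, and then sums the path probabilities back up, which is exactly your sequential "peel off one selection at a time" factorization $E\bigl[g\,\mathbf{1}_{\{N_{0,n+1}=k\}}\bigr]=E[g]\,P(N_{0,n+1}=k)$, both resting on the same key point that each newly drawn observation is a fresh, previously unobserved element of the chosen stream, independent of the data used to make the selection. Your induction is just a step-by-step repackaging of the paper's sum over paths, so the approach matches the paper's proof.
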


\begin{proof}
    \begin{align*}
        \mathbb{P}\left(N_{0, n}, N_{1, n}\right) & = \sum_{path}\mathbb{P}\left(\text{path leading to }(N_{0, n}, N_{1, n})\right).
    \end{align*}
    In the below, let $f$ denote respective densities (or, probabilities) for the random variables (or, events) given after it.
    \begin{align*}
        &f(X_{1}, X_{2}, \ldots, X_{N_{0, n}} | N_{0, n}, N_{1, n})\\ \notag
     =& \frac{f(X_{1}, X_{2}, \ldots, X_{N_{0, n}}, N_{0, n}, N_{1, n})}{f(N_{0, n}, N_{1, n})} \\\notag
    =& \frac{\sum_{path} f(X_{1}, X_{2}, \ldots, X_{N_{0, n}}, \text{ path leading to }(N_{0, n}, N_{1, n}))}{f(N_{0, n}, N_{1, n})} \\\notag
        =& \frac{\sum_{path} f(X_{1}, X_{2}, \ldots, X_{N_{0, n}} | \text{ path leading to }(N_{0, n}, N_{1, n})) . \mathbb{P}\left(\text{path leading to }(N_{0, n}, N_{1, n})\right)}{f(N_{0, n}, N_{1, n})} \\\notag
        =& \frac{f(X_{1})f(X_{2})\ldots f(X_{N_{0, n}}) . \sum_{path}\mathbb{P}\left(\text{path leading to }(N_{0, n}, N_{1, n})\right)}{f(N_{0, n}, N_{1, n})} \\\notag
        =& f(X_{1})f(X_{2})\ldots f(X_{N_{0, n}}) \text{, }
    \end{align*}
    \text{where, $f$ is $f_0$ (under $H_0$) and $f_1$ (under $H_1$).}\\
    \text{Similarly, this holds for $Y_i's$.}
\end{proof}

\begin{Remark}
    A more general version of the \autoref{Lemma 2.1} appears in \cite{MelfiandPage2000}, although the proof given there is somewhat more lengthy and involved.
\end{Remark}

\begin{Remark}
   We consider without loss of generality, $f_0$ to be better distribution. Moreover, we consider null hypothesis $H_0$: $X \sim f_0$ and alternative hypothesis $H_1$: $Y \sim f_0$. Without loss of generality, we consider that $H_0$ is true.
\end{Remark}

\subsection{Method \texorpdfstring{$\mathcal{M}$}{M}} \label{M}

One observes that $n_{max} \geq \frac{n}{2}$. At step $n$, we consider $U_{1}, U_{2}, \ldots, U_{n_{max}}$ conditionally i.i.d. sample where $U_i$ ($X_i$ or $Y_i$) corresponds to $n_{max}$.

\begin{enumerate}
    \item[(i)] If $\log{\left[\frac{\prod_{i=1}^{n_{max}} f_0{(U_i)}}{\prod_{i=1}^{n_{max}} f_1{(U_i)}}\right]}$ $>$ $0$, we draw one more sample from $f_j$ corresponding to $n_{max}$.
    
    \item[(ii)] If $\log{\left[\frac{\prod_{i=1}^{n_{max}} f_0{(U_i)}}{\prod_{i=1}^{n_{max}} f_1{(U_i)}}\right]}$ $<$ $0$, we draw one more sample from $f_j$ corresponding to $n_{min}$.
    
    \item[(iii)] If $\log{\left[\frac{\prod_{i=1}^{n_{max}} f_0{(U_i)}}{\prod_{i=1}^{n_{max}} f_1{(U_i)}}\right]}$ $=$ $0$, we draw a sample from $f_0$ or $f_1$ with probability $\frac{1}{2}$ each.
\end{enumerate}

\section{Main Result} \label{Section 3}

We aim to allocate more samples to a better density $(f_0)$. $\mathbb{PI}_n$ denotes the probability of incorrect allocation in this context at step $n$. We consider $f_0$ and $f_1$ to be continuous. If they are not continuous, we need to adjust a little bit (not shown in the paper). Here, in this context, let
\begin{align*}
    \mathbb{PI}_n 
    &= 
    \begin{cases}
        \mathbb{P}\!\left[\log\!\left({\prod_{i=1}^{n_{\max}}\frac{f_0(U_i)}{f_1(U_i)}}\right) < 0\right], & \text{if } U \sim X, \\[8pt]
        \mathbb{P}\!\left[\log\!\left({\prod_{i=1}^{n_{\max}}\frac{f_0(U_i)}{f_1(U_i)}}\right) > 0\right], & \text{if } U \sim Y
    \end{cases} \\[12pt]
    &= 
    \begin{cases}
        \mathbb{P}\!\left[\sum_{i=1}^{n_{\max}} Z_i^{(X)} < 0\right], 
        & \text{if } U \sim X \;\; \text{where } Z_i^{(X)} = \log\!\left(\tfrac{f_0(X_i)}{f_1(X_i)}\right), \\[8pt]
        \mathbb{P}\!\left[\sum_{i=1}^{n_{\max}} Z_i^{(Y)} > 0\right], 
        & \text{if } U \sim Y \;\; \text{where } Z_i^{(Y)} = \log\!\left(\tfrac{f_0(Y_i)}{f_1(Y_i)}\right)
    \end{cases} \\[12pt]
    &= 
    \begin{cases}
        \mathbb{P}\!\left[\sum_{i=1}^{n_{\max}} \frac{Z_i^{(X)} - \eta_x}{\sigma_x} 
        < -\frac{\eta_x}{\sigma_x} n_x \right], & \text{if } U \sim X, \\[8pt]
        \mathbb{P}\!\left[\sum_{i=1}^{n_{\max}} \frac{Z_i^{(Y)} - \eta_y}{\sigma_y} 
        > -\frac{\eta_y}{\sigma_y}  n_y \right], & \text{if } U \sim Y
    \end{cases} \\[12pt]
    &\rightarrow
    \begin{cases}
        1 - \Phi\!\left(\tfrac{\eta_x}{\sigma_x}  \sqrt{n_x}\right), & \text{if } U \sim X, \\[8pt]
        \Phi\!\left(\tfrac{\eta_y}{\sigma_y}  \sqrt{n_y}\right), & \text{if } U \sim Y\\
    \end{cases}
    \text{as } n \to \infty.
\end{align*}
Here, $\eta_x$, $\sigma_x$ and $\eta_y$, $\sigma_y$ are respective means and standard deviations of $Z_i^{(X)} = \log\!\left(\tfrac{f_0(X_i)}{f_1(X_i)}\right)$ and $Z_i^{(Y)} = \log\!\left(\tfrac{f_0(Y_i)}{f_1(Y_i)}\right)$, with $n_x = N_{0,n}$ and $n_y = N_{1,n}$. The expected number of allocations to the less effective treatment is
\begin{align}
    \mathbb{E}\left(N_{1,n}\right)
    & = \sum_{m=2}^{n} \mathbb{PI}_m \nonumber\\
    & \approx \sum_{i = 1}^{n_x} \left(1 - \Phi\!\left(\tfrac{\eta_x}{\sigma_x} \sqrt{i}\right)\right)
      + \sum_{j = 1}^{n_y} \Phi\!\left(\tfrac{\eta_y}{\sigma_y} \sqrt{j}\right) \nonumber \quad \text{ [applying CLT]}\\
    & \leq \sum_{i = 1}^{\infty} \left(1 - \Phi\!\left(\tfrac{\eta_x}{\sigma_x} \sqrt{i}\right)\right)
      + \sum_{j = 1}^{\infty} \Phi\!\left(\tfrac{\eta_y}{\sigma_y} \sqrt{j}\right)
      < \infty. \label{(1)}
\end{align}
As $\eta_x > 0$ and $\eta_y < 0$, $N_{1,n} < \infty \hspace{2mm} \forall \hspace{2mm} n$, and $\lim\limits_{n \to \infty} \mathbb{E}(N_{1,n}) < \infty$.

With $n_x = N_{0,n}$ and $n_y = N_{1,n}$ moderately large, the expected number of allocations to the less effective treatment can be given by the following result.

\begin{theorem} [Expression for the expected number of inferior allocations]
\label{Theorem 3.1}
For large $n$, the expected number of allocations to the less effective treatment under the proposed adaptive rule (and with the assumptions given in Section \ref{Section 2}) satisfies \[
\mathbb{E}(N_{1,n}) \approx
\frac{1}{2}\!\left(
\frac{\sigma_x^{2}}{\eta_x^{2}}
+
\frac{\sigma_y^{2}}{\eta_y^{2}}
\right),
\]
which represents a finite constant depending only on the first two moments of the log-likelihood ratio statistics. Also, as $N_{1,n}$ is the sum of independent Bernoulli variables, it follows that all the moments of $N_{1,n}$ are bounded.
\end{theorem}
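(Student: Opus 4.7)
The starting point for my proof is the upper bound already derived in the excerpt, namely
\[
E(N_{1,n}) \;\approx\; \sum_{i=1}^{n_x}\!\left(1 - \Phi\!\bigl(\tfrac{\eta_x}{\sigma_x}\sqrt{i}\bigr)\right) + \sum_{j=1}^{n_y}\Phi\!\bigl(\tfrac{\eta_y}{\sigma_y}\sqrt{j}\bigr).
\]
Since $\eta_x>0$ and $\eta_y<0$, both series have geometrically decaying Gaussian tails and, as $n\to\infty$, can be replaced by their infinite-sum counterparts with vanishing error. Therefore the plan is to evaluate $S(c) := \sum_{i=1}^{\infty}\bigl(1-\Phi(c\sqrt{i})\bigr)$ for an arbitrary $c>0$, apply it once with $c = \eta_x/\sigma_x$ and once with $c = |\eta_y|/\sigma_y$ (using the symmetry identity $\Phi(-a)=1-\Phi(a)$ to rewrite the second sum), and finally add.

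The main computational step is to replace $S(c)$ by the integral $\int_0^{\infty}\bigl(1-\Phi(c\sqrt{x})\bigr)\,dx$, whose error is controlled by Euler--Maclaurin (the integrand is positive, decreasing, and $C^{\infty}$, so the error is $O(1-\Phi(c))$, negligible as $c$ is fixed). With the substitution $u=c\sqrt{x}$, so that $dx=2u\,du/c^{2}$, this becomes
\[
\frac{2}{c^{2}}\int_{0}^{\infty} u\bigl(1-\Phi(u)\bigr)\,du.
\]
An integration by parts with $v=u^{2}/2$ and $w = 1-\Phi(u)$ (so $dw=-\phi(u)\,du$) kills the boundary term and yields $\tfrac{1}{2}\int_{0}^{\infty} u^{2}\phi(u)\,du = \tfrac{1}{4}$, because $\int_{-\infty}^{\infty}u^{2}\phi(u)\,du=1$ and the integrand is even. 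Consequently $S(c) \approx 1/(2c^{2})$, and substituting the two choices of $c$ gives the stated limit $\tfrac{1}{2}\bigl(\sigma_x^{2}/\eta_x^{2}+\sigma_y^{2}/\eta_y^{2}\bigr)$.

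For the moments assertion, I write $N_{1,n}=\sum_{m=2}^{n} B_{m}$ where $B_{m}$ is the indicator of an incorrect allocation at stage $m$; by the computation above, $\sum_{m=2}^{\infty} P(B_{m}=1) = \sum_{m=2}^{\infty}\mathbb{PI}_{m}<\infty$. For any fixed positive integer $k$,
\[
E\bigl[N_{1,n}^{k}\bigr] \;=\; \sum_{m_{1},\dots,m_{k}} P\bigl(B_{m_{1}}=\cdots=B_{m_{k}}=1\bigr) \;\le\; \sum_{m_{1},\dots,m_{k}} \prod_{\ell=1}^{k} P\bigl(B_{m_{\ell}}=1\bigr)^{1/k}
\]
(or, in the independence setting assumed by the authors, one simply has $E\bigl[N_{1,n}^{k}\bigr]\le B_{k}(\lambda,\dots,\lambda)$, the $k$-th Bell polynomial of the cumulants of a Poisson with mean $\lambda=\sum P(B_m=1)$), which is uniformly bounded in $n$. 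Either route shows that every moment of $N_{1,n}$ is dominated by a constant depending only on $\eta_x,\sigma_x,\eta_y,\sigma_y$ and $k$.

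The principal obstacle is rigorously justifying the Gaussian approximation that replaces $\mathbb{PI}_m$ by the tail $1-\Phi(c\sqrt{n_{\max}})$ uniformly over $m$. For moderately large $n_{\max}$ the CLT gives the leading order, but to guarantee the summability of the error one needs a Berry--Esseen bound or a Cram\'er large-deviation correction on each step, together with the observation that the random index $n_{\max}$ is essentially $m$ (since, by the convergence of $E(N_{1,n})$, one has $n_{\max}=m-O(1)$ in expectation). Handling this stochastic-index issue, and verifying that the approximation is strong enough to give an \emph{equality} in the limit rather than only an inequality, is the delicate part of the argument; the rest is standard Gaussian integration.
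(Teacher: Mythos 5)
Your proposal follows essentially the same route as the paper: approximate the two tail-probability series by integrals, substitute $u = c\sqrt{x}$, and integrate by parts to obtain $1/(2c^2)$ for each stream, yielding $\tfrac{1}{2}\left(\sigma_x^2/\eta_x^2 + \sigma_y^2/\eta_y^2\right)$. Your moment argument is a minor variant (a H\"older-type bound on joint indicator probabilities, rather than the paper's bound on the moment generating function via $\sum_m \mathbb{PI}_m < \infty$), and your closing remarks on Berry--Esseen control and the random index $n_{\max}$ correctly identify rigor gaps that the paper itself leaves at the level of approximation.
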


\begin{proof}
Starting from the preceding summation (\autoref{(1)}) and applying the normal approximation to the tail probabilities,
    \begin{align*}
    \mathbb{E}\left(N_{1,n}\right)
    & \approx \sum_{i=1}^{\infty}\Phi\!\left( - \tfrac{\eta_x}{\sigma_x} \sqrt{i}\right) + \sum_{j=1}^{\infty}\Phi\!\left(\tfrac{\eta_y}{\sigma_y} \sqrt{j}\right) \\\notag
    & \approx \int_{-\infty}^{0} \Phi\left(- \frac{\eta_x}{\sigma_x}  \sqrt{-t}\right) \,dt + \int_{-\infty}^{0} \Phi\left(\frac{\eta_y}{\sigma_y}  \sqrt{-t}\right) \,dt \\\notag
    & = \frac{1}{2} \left(\frac{\sigma_x^2}{\eta_x^2} + \frac{\sigma_y^2}{\eta_y^2}\right) \quad \text{[using integration by parts]}
\end{align*}
where $N_{0,n}$ and $N_{1,n}$ are moderately large.

This is finite. This is the approximate value of the number of applications of the less effective treatment for large $n$.

As $\sum_{m=2}^{\infty} \mathbb{PI}_m < \infty$, moment generating function of $N_{1,n}$ is finite for finite domain by a constant function not depending on $n$. Thus, all the moments of $N_{1,n}$ are similarly bounded.
\end{proof}

\begin{Remark}
    $\mathbb{E}(N_{1,n})$ increases to a finite quantity, as $n \to \infty$. Also, we proved that all the moments of $N_{1,n}$ are bounded. Hence, $\frac{N_{1,n}}{N_{0,n}} \to 0$ in probability, as $n \to \infty$, by Markov inequality.
\end{Remark}

\subsection{Stopping Rule} \label{Section 3.1}

We will perform the SPRT with $U_{1}, U_{2}, \ldots$ to test hypotheses $K_0 : U_i \sim f_0$ vs $K_1 : U_i \sim f_1$, where $U_i$ corresponds to data stream for $n_{max}$. Under $H_0$, if $K_0$ is accepted with $n_{max}$ corresponding to $\mathcal{X}$ data stream, we have correct selection. Here, we derive the expression for probability of incorrect selection.

\begin{Remark}
    In summary, at each step $n$, we observe $U_{1}, U_{2}, \ldots, U_{n_{max}}$, apply method $\mathcal{M}$ to determine the population for the next treatment allocation, and update $U_{1}, U_{2}, U_{3}, \ldots$ accordingly. The procedure thus generates the $U$-data stream adaptively and terminates when the SPRT between $K_0$ and $K_1$ stops. Consequently, the probability of incorrect selection ($PICS$) of the adaptive rule coincides with that of the corresponding SPRT.
\end{Remark}

\subsection{Details of Adaptive SPRT Rule} \label{Section 3.2}

Suppose $\alpha = \mathbb{P}(\text{Type-I error}) = PICS_I$ and $\beta = \mathbb{P}(\text{Type-II error}) = PICS_{II}$. Let $a \approx \log\!\left(\frac{1-\beta}{\alpha}\right)$ and $b \approx \log\!\left(\frac{\beta}{1-\alpha}\right)$. We consider the following adaptive SPRT rule:
\begin{itemize}
    \item We continue adaptive sampling if
$b < \sum_{i=1}^{n_{\max}}\log\!\left(\frac{f_1(U_i)}{f_0(U_i)}\right) < a$.
    \item We stop adaptive sampling in favour of $K_1$ if
$\sum_{i=1}^{n_{\max}}\log\!\left(\frac{f_1(U_i)}{f_0(U_i)}\right) \geq a$.
    \item We stop adaptive sampling in favour of $K_0$ if
$\sum_{i=1}^{n_{\max}}\log\!\left(\frac{f_1(U_i)}{f_0(U_i)}\right) \leq b$.
\end{itemize}

Let $ASN$ denote the average sample number of the proposed adaptive SPRT rule, and let $ASN_{K_0}$ and $ASN_{K_1}$ denote the average sample numbers under $K_0$ and $K_1$, respectively.
From \cite{Rao1973} (pp. $479$), we get approximate expressions for $ASN$ of the SPRT as
\begin{equation} \label{(2)}
    ASN_{K_0} \approx \frac{b(1-\alpha) + a\alpha}{-\eta_x} \quad \text{ and, } \quad ASN_{K_1} \approx \frac{b\beta + a(1-\beta)}{-\eta_y},
\end{equation}
$\alpha$ and $\beta$ are small (tend to $0$) and accordingly $a \to \infty$ and $b \to -\infty$.
Hence, from \autoref{(2)},
\begin{align*}
    \log(PICS_{II}) \approx -\eta_x \cdot ASN_{K_0} + o(ASN_{K_0}) \\\notag
    \log(PICS_{I}) \approx \eta_y \cdot ASN_{K_1} + o(ASN_{K_1}).
\end{align*}
Thus, we get the expression for $\log(PICS)$ using our stopping rule and our selection procedure.

\begin{Remark}
    If we make SPRT with $X$-data stream only (or, $Y$-data stream only) instead of $U$-data stream for hypotheses $K_0: X \sim f_0$ vs $K_1: X \sim f_1$ (or, $K_0: Y \sim f_0$ vs $K_1: Y \sim f_1$), we get similar expression for $\log(PICS_{II})$ and $\log(PICS_I)$. Only difference will be in $ASN$. In that case,
    \begin{equation} \label{(3)}
        ASN \approx ASN_{K_0} + N_1^\ast \text{ (or, } \approx ASN_{K_1} + N_1^\ast)
    \end{equation}
    where, $N_1^\ast = \lim \limits_{n \to \infty} \mathbb{E}(N_{1,n})$.
\end{Remark}

\begin{theorem}[Efficiency of the proposed selection procedure] \label{Theorem 3.2}
    Under the assumptions in Section \ref{Section 2}, the proposed adaptive SPRT rule using $U_{1}, U_{2}, \ldots, U_{n_{max}}$ is efficient, and for the same value of $PICS$,
    \begin{align*}
        \frac{ASN}{ASN_{K_0}} \to 1 \quad \text{ or, } \quad
        \frac{ASN}{ASN_{K_1}} \to 1. 
    \end{align*}
\end{theorem}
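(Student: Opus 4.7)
The plan is to deduce \autoref{Theorem 3.2} from a decomposition of the total sample size as the SPRT stopping time on the $U$-stream plus the number of inferior allocations, and then invoke \autoref{Theorem 3.1} to bound the latter while the former diverges as $PICS \to 0$. The two ingredients I need are (i) that the standard ASN formulas for the SPRT carry over to the $U$-stream, and (ii) that the inferior-stream contribution to the total sample count is only a fixed additive overhead.

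For (i), I would argue that the $U$-stream inherits the i.i.d.\ structure required by classical SPRT theory. By \autoref{Theorem 3.1}, $N_{1,n}$ is almost surely finite with bounded moments, so there is a finite (random) time after which $n_{\max} = N_{0,n}$ permanently and $U_i$ coincides with $X_i$ under $H_0$ (symmetrically with $Y_i$ under $H_1$). Combined with the conditional i.i.d.\ lemma of Section \ref{Section 2}, which states that the $\mathcal{X}$-samples are i.i.d.\ conditional on the allocation counts, this tail-equivalence lets me apply Wald's identity to $\sum_{i=1}^{n_{\max}}\log\bigl(f_1(U_i)/f_0(U_i)\bigr)$ with boundaries $a$ and $b$, producing $\mathbb{E}[n_{\max}] \approx ASN_{K_0}$ under $K_0$ and $\mathbb{E}[n_{\max}] \approx ASN_{K_1}$ under $K_1$, exactly as in \autoref{(2)}. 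For (ii), I would write $n = n_{\max} + n_{\min}$ and take expectations at the adaptive stopping time, obtaining
$$ASN \;=\; \mathbb{E}[n_{\max}] + \mathbb{E}[n_{\min}] \;\approx\; ASN_{K_j} + N_1^*, \qquad j \in \{0,1\},$$
where $N_1^* := \lim_{n\to\infty}\mathbb{E}(N_{1,n}) < \infty$ is the finite constant from \autoref{Theorem 3.1} (already recorded in \autoref{(3)}). Since $\alpha,\beta \to 0$ forces $a \to \infty$ and $b \to -\infty$, and hence $ASN_{K_0}, ASN_{K_1} \to \infty$, while $N_1^*$ stays fixed, one concludes $ASN/ASN_{K_j} = 1 + N_1^*/ASN_{K_j} \to 1$, which is the stated efficiency.

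The main technical obstacle is step (i): rigorously transferring the standard SPRT ASN formulas to the adaptively-chosen $U$-stream, whose identity can switch back and forth between $\mathcal{X}$ and $\mathcal{Y}$ at early stages. I would handle this by exploiting the almost-sure finiteness of inferior allocations from \autoref{Theorem 3.1}: the initial segment on which switches may occur is a.s.\ finite and contributes only $O(1)$ to $\mathbb{E}[n_{\max}]$, which is absorbed into the approximation because the SPRT boundaries force $ASN_{K_j} \to \infty$ as $\alpha,\beta \to 0$; thereafter the $U$-stream reduces to a genuine i.i.d.\ sequence under the true hypothesis, so Wald's identity applies unchanged and the ratio claim follows.
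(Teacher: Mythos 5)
Your proposal is correct and follows essentially the same route as the paper: the paper's proof is precisely the decomposition $ASN \approx ASN_{K_j} + N_1^*$ recorded in \autoref{(3)}, combined with the finiteness of $N_1^*$ from \autoref{Theorem 3.1} and the divergence of $ASN_{K_0}, ASN_{K_1}$ as $\alpha,\beta \to 0$. Your additional discussion of why Wald's $ASN$ formulas transfer to the adaptively chosen $U$-stream only fills in detail that the paper leaves implicit (it simply cites the classical formulas in \autoref{(2)}); the argument itself is the same.
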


\begin{proof}
    The proof can be directly followed from the \autoref{(3)} for the adaptive SPRT rule.
\end{proof}

\subsection{Example} \label{Section 3.3}

We illustrate the results with an example of $f_0 \sim \operatorname{N}(\theta_{0}, 1)$ (first data stream) and $f_1 \sim \operatorname{N}(\theta_{1}, 1)$ (second data stream). Then with the notations of Section \ref{Section 3} we have, $\eta_x = \frac{1}{2}  (\theta_0 - \theta_1)^2 > 0$ and $\eta_y = -\frac{1}{2}  (\theta_0 - \theta_1)^2 < 0$ are the means of $Z_i^{(X)} = \log\!\left(\tfrac{f_0(X_i)}{f_1(X_i)}\right)$ and $Z_i^{(Y)} = \log\!\left(\tfrac{f_0(Y_i)}{f_1(Y_i)}\right)$ respectively. Also, the calculated variances for both are same, i.e., $\sigma^2_x = \sigma^2_y = (\theta_0 - \theta_1)^2$.

In this context, from \autoref{Theorem 3.1}, the expression of $N_1^\ast$ is,
\begin{align*}
    N_1^\ast
    & \approx \sum_{m=1}^{\infty} \Phi\left(- \frac{\eta_x}{\sigma_x}  \sqrt{m}\right) + \sum_{m=1}^{\infty} \Phi\left(\frac{\eta_y}{\sigma_y}  \sqrt{m}\right) \\\notag
    & = \frac{1}{2} \left(\frac{\sigma_x^2}{\eta_x^2} + \frac{\sigma_y^2}{\eta_y^2}\right) \\\notag
    & = \frac{4}{(\theta_0 - \theta_1)^2} < \infty.
\end{align*}

\begin{Remark} \label{Remark 6}
    With $X_i \sim \operatorname{N}(-\theta_{0}, 1)$ and $Y_i \sim \operatorname{N}(\theta_{0}, 1)$, we have, $\eta_x = 2\theta_0^2$ and $\eta_y = - 2\theta_0^2$, whereas, $\sigma^2_x = \sigma^2_y = 4\theta_0^2$ for doing adaptive sequential testing as discussed earlier. Here, we can note that, the allocation rule in this context does not depend on parameters and it depends only on $\bar{U_n} = \frac{1}{n}\sum_{i=1}^nU_i$ and $n$. In that case, $N_1^\ast \approx \frac{1}{\theta^2} < \infty$.
\end{Remark}

\begin{Remark}
    Let $f_0$, $f_1$ be in the same $MLR$-family (with parameter $\theta$) with $\bar{X_n}$ and $\bar{Y_n}$ as sufficient statistics respectively. Note that, for testing composite hypotheses $H^\prime_0: \theta \leq \theta_0$ vs $H^\prime_1: \theta > \theta_1$, if we try to apply adaptive sequential rule as discussed earlier, allocation rule will depend on $\bar{U_n}$ and parameters, and $a$, $b$, $\alpha$, $\beta$ depend on parameters. Then, $N_1^\ast \approx \frac{1}{2} \left (\frac{\sigma_x^2 (\theta^\prime_0, \theta^\prime_1)}{\eta_x^2(\theta^\prime_0, \theta^\prime_1)} + \frac{\sigma_y^2 (\theta^\prime_0, \theta^\prime_1)}{\eta_y^2(\theta^\prime_0, \theta^\prime_1)}\right)$, where $\theta^\prime_0 \in H^\prime_0$ and $\theta^\prime_1 \in H^\prime_1$. In that case, for $\theta^\prime_0 \in H^\prime_0$ and $\theta^\prime_1 \in H^\prime_1$, the highest value of $N_1^\ast$ is approximately $\frac{1}{2} \left (\frac{\sigma_x^2 (\theta_0, \theta_1)}{\eta_x^2(\theta_0, \theta_1)} + \frac{\sigma_y^2 (\theta_0, \theta_1)}{\eta_y^2(\theta_0, \theta_1)}\right)$.
\end{Remark}

\section{Simulation Studies} \label{Section 4}
In this section, we present a comprehensive simulation analysis to assess the performance of the proposed adaptive SPRT procedure under a range of underlying distributional settings. For each configuration, we estimate the $PCS$, the expected number of allocations to the inferior population, and the $ASN$. Simulations are conducted under Normal, Poisson, and Asymmetric Laplace distributions, with $1000$ replications in each scenario to ensure numerical stability. These results collectively convey the operational behaviour of the procedure across distinct distributional regimes.

Across all experiments, the decision thresholds are computed as $a = \log \left( \frac{1-\beta}{\alpha} \right), \hspace{1mm} b = \log \left(\frac{\beta}{1-\alpha} \right)$, for each specified pair $(\alpha,\beta)$. Sampling commences when one observation is drawn independently from each population, forming the initial likelihood contributions. Subsequent sampling proceeds according to the adaptive allocation rule described in Section \ref{Section 3.2}. At each stage, the accumulated sample sizes from the two populations are examined, and the next observation is drawn from the population whose cumulative log-likelihood ratio $L_n = \sum_{i=1}^{n_{\max}} \log\!\left(\frac{f_0(U_i)}{f_1(U_i)}\right)$, provides less support for $K_1$ compared to $K_0$ (hypotheses as defined in Section \ref{Section 3.1}). This mechanism ensures that sampling is dynamically steered toward the superior population, thereby allowing the likelihood ratio to evolve in an efficient manner towards getting more and more samples from the superior population and highlighting the implication of the adaptive nature of the design.

After each new observation, the log-likelihood ratio is updated based on the stream currently yielding the larger sample size. The procedure terminates once the statistic crosses one of the two boundaries: the alternative $K_1$ is accepted when the statistic exceeds $a$, and the null $K_0$ is accepted when it falls below $b$. For every replication, we record whether the final decision corresponds to the truly superior population, the total number of observations drawn, and the frequency of allocations to the inferior population. Averages across replications yield the performance metrics $PCS$, $\mathbb{E}(N_{1,n})$, and $ASN$. Moreover, we denote the theoretical limiting value of $\mathbb{E}(N_{1,n})$ as $N_1^\ast$ and report it for comparison with the value obtained in the corresponding simulation results.

The subsequent subsections outline the distributional scenarios considered and summarize the corresponding numerical results.

\subsection{Normal Distributions (Adaptive SPRT)}

We first consider, $f_0 \sim \operatorname{N}(\theta_0,1), \hspace{1mm} f_1 \sim \operatorname{N}(\theta_1,1)$, with following mean pairs
$$
(\theta_0,\theta_1) \in \{(0.1,0), (0.2,0), (0.3,0), (0.4,0), (0.5,0)\}.
$$

Results of this simulation study are reported in \autoref{Table1}. The adaptive SPRT procedure continues to perform reliably, producing high $PCS$ and maintaining small inferior allocations across these configurations. The expected number of allocations to the inferior population remains small and aligns closely with the theoretical benchmark $N_1^\ast$ derived in Section \ref{Section 3.3}. As the separation $|\theta_0 - \theta_1|$ increases, $ASN$ declines substantially and as $(\alpha, \beta)$ decreases $\mathbb{E}(N_{1,n})$ increases to the finite limit $N_1^\ast$, reflecting good discrimination between the two populations.

\begin{table*}[ht]
\centering
\caption{Simulation results for two Normal populations with distinct mean pairs $(\theta_{0}, \theta_{1})$ and common variance $(\sigma_{0}^2, \sigma_{1}^2) = (1, 1)$ conducted within the adaptive SPRT framework.}

\label{Table1}

\subfloat[$(\theta_0, \theta_1) = (0.1, 0)$, $N_1^\ast = 400$]{
\resizebox{0.45\textwidth}{!}{%
\begin{tabular}{|c|ccc|}
\hline
$\alpha(=\beta)$ & $PCS$ & $\mathbb{E}(N_{1,n})$ & $ASN$ \\ \hline
$10^{-3}$         & 0.908 & 349.550 & 1575.253 \\
$5\times10^{-5}$  & 0.955 & 394.827 & 2289.517 \\
$10^{-5}$         & 0.969 & 391.828 & 2657.338 \\
$5\times10^{-6}$  & 0.975 & 398.996 & 2756.844 \\
$10^{-6}$         & 0.986 & 394.102 & 3093.543 \\ \hline
\end{tabular}%
}}
\hspace{5mm}
\subfloat[$(\theta_0, \theta_1) = (0.2, 0)$, $N_1^\ast = 100$]{
\resizebox{0.45\textwidth}{!}{%
\begin{tabular}{|c|ccc|}
\hline
$\alpha(=\beta)$ & $PCS$ & $\mathbb{E}(N_{1,n})$ & $ASN$ \\ \hline
$10^{-3}$         & 0.918 & 91.237 & 418.085 \\
$5\times10^{-5}$  & 0.955 & 98.748  & 574.303 \\
$10^{-5}$         & 0.979 & 97.999  & 663.545 \\
$5\times10^{-6}$  & 0.976 & 99.725  & 694.320 \\
$10^{-6}$         & 0.984 & 100.309  & 781.425 \\ \hline
\end{tabular}%
}}

\vspace{4mm}

\subfloat[$(\theta_0, \theta_1) = (0.3, 0)$, $N_1^\ast = 44.444$]{
\resizebox{0.45\textwidth}{!}{%
\begin{tabular}{|c|ccc|}
\hline
$\alpha(=\beta)$ & $PCS$ & $\mathbb{E}(N_{1,n})$ & $ASN$ \\ \hline
$10^{-3}$         & 0.909 & 38.350 & 180.370 \\
$5\times10^{-5}$  & 0.969 & 38.488 & 252.804 \\
$10^{-5}$         & 0.974 & 46.189 & 296.015 \\
$5\times10^{-6}$  & 0.975 & 43.336 & 309.886 \\
$10^{-6}$         & 0.989 & 40.482 & 351.835 \\ \hline
\end{tabular}%
}}
\hspace{5mm}
\subfloat[$(\theta_0, \theta_1) = (0.4, 0)$, $N_1^\ast = 25$]{
\resizebox{0.45\textwidth}{!}{%
\begin{tabular}{|c|ccc|}
\hline
$\alpha(=\beta)$ & $PCS$ & $\mathbb{E}(N_{1,n})$ & $ASN$ \\ \hline
$10^{-3}$         & 0.919 & 20.496 & 100.263 \\
$5\times10^{-5}$  & 0.952 & 23.338 & 143.625 \\
$10^{-5}$         & 0.971 & 25.552 & 168.346 \\
$5\times10^{-6}$  & 0.987 & 23.845 & 177.570 \\
$10^{-6}$         & 0.984 & 22.652 & 194.418 \\
 \hline
\end{tabular}%
}}

\vspace{4mm}

\subfloat[$(\theta_0, \theta_1) = (0.5, 0)$, $N_1^\ast = 16$]{
\resizebox{0.45\textwidth}{!}{%
\begin{tabular}{|c|ccc|}
\hline
$\alpha(=\beta)$ & $PCS$ & $\mathbb{E}(N_{1,n})$ & $ASN$ \\ \hline
$10^{-3}$         & 0.930 & 13.422 & 66.488 \\
$5\times10^{-5}$  & 0.961 & 15.735 & 94.617 \\
$10^{-5}$         & 0.980 & 14.324 & 105.348 \\
$5\times10^{-6}$  & 0.979 & 14.463 & 110.278 \\
$10^{-6}$         & 0.985 & 15.161 & 124.351 \\
 \hline
\end{tabular}%
}}
\end{table*}

\subsection{Poisson Distributions (Adaptive SPRT)}

Next, we consider $f_0 \sim \operatorname{P}(\lambda_0), \hspace{1mm}
f_1 \sim \operatorname{P}(\lambda_1)$, with several contrasting parameter pairs
$$
(\lambda_0, \lambda_1) \in \{(2.5, 2), (3, 2.5), (3.5, 2.5), (2, 1), (1.5, 0.5), (2.5, 1)\}.
$$

The adaptive procedure continues to exhibit good behaviour under these discrete distributions, yielding high $PCS$ values and maintaining modest inferior allocations. As in the Normal case, $ASN$ decreases as the divergence between $\lambda_0$ and $\lambda_1$ widens. The corresponding numerical outcomes are displayed in \autoref{Table2}. 

\begin{table*}[h]
\centering
\caption{Simulation results for two Poisson populations with distinct mean pairs $(\lambda_{0}, \lambda_{1})$ conducted within the adaptive SPRT framework.}
\label{Table2}

\subfloat[$(\lambda_0, \lambda_1) = (2.5, 2)$, $N_1^\ast = 35.851$]{
\resizebox{0.45\textwidth}{!}{%
\begin{tabular}{|c|ccc|}
\hline
$\alpha(=\beta)$ & $PCS$ & $\mathbb{E}(N_{1,n})$ & $ASN$ \\ \hline
$10^{-3}$         & 0.924 & 32.295 & 146.120 \\
$5\times10^{-5}$  & 0.959 & 35.314 & 203.890 \\
$10^{-5}$         & 0.984 & 32.991 & 229.243 \\
$5\times10^{-6}$  & 0.982 & 35.608 & 245.363 \\
$10^{-6}$         & 0.987 & 34.219 & 271.723 \\ \hline
\end{tabular}%
}}
\hspace{5mm}
\subfloat[$(\lambda_0, \lambda_1) = (3, 2.5)$, $N_1^\ast = 43.879$]{
\resizebox{0.45\textwidth}{!}{%
\begin{tabular}{|c|ccc|}
\hline
$\alpha(=\beta)$ & $PCS$ & $\mathbb{E}(N_{1,n})$ & $ASN$ \\ \hline
$10^{-3}$         & 0.909 & 39.151 & 176.486 \\
$5\times10^{-5}$  & 0.971 & 41.286 & 246.989 \\
$10^{-5}$         & 0.980 & 42.927 & 286.710 \\
$5\times10^{-6}$  & 0.978 & 43.234 & 303.215 \\
$10^{-6}$         & 0.986 & 42.506 & 333.556 \\ \hline
\end{tabular}%
}}

\vspace{4mm}

\subfloat[$(\lambda_0, \lambda_1) = (3.5, 2.5)$, $N_1^\ast = 11.888$]{
\resizebox{0.45\textwidth}{!}{%
\begin{tabular}{|c|ccc|}
\hline
$\alpha(=\beta)$ & $PCS$ & $\mathbb{E}(N_{1,n})$ & $ASN$ \\ \hline
$10^{-3}$         & 0.931 & 10.940 & 49.169 \\
$5\times10^{-5}$  & 0.967 & 10.668 & 66.835 \\
$10^{-5}$         & 0.980 & 11.002 & 76.338 \\
$5\times10^{-6}$  & 0.987 & 10.611 & 80.323 \\
$10^{-6}$         & 0.996 & 10.525 & 89.907 \\
 \hline
\end{tabular}%
}}
\hspace{5mm}
\subfloat[$(\lambda_0, \lambda_1) = (2, 1)$, $N_1^\ast = 5.771$]{
\resizebox{0.45\textwidth}{!}{%
\begin{tabular}{|c|ccc|}
\hline
$\alpha(=\beta)$ & $PCS$ & $\mathbb{E}(N_{1,n})$ & $ASN$ \\ \hline
$10^{-3}$         & 0.935 & 5.452 & 24.310 \\
$5\times10^{-5}$  & 0.969 & 5.548 & 32.654 \\
$10^{-5}$         & 0.981 & 5.793 & 37.123 \\
$5\times10^{-6}$  & 0.986 & 5.962 & 39.403 \\
$10^{-6}$         & 0.992 & 5.518 & 42.659 \\
 \hline
\end{tabular}%
}}

\vspace{4mm}

\subfloat[$(\lambda_0, \lambda_1) = (1.5, 0.5)$, $N_1^\ast = 3.642$]{
\resizebox{0.45\textwidth}{!}{%
\begin{tabular}{|c|ccc|}
\hline
$\alpha(=\beta)$ & $PCS$ & $\mathbb{E}(N_{1,n})$ & $ASN$ \\ \hline
$10^{-3}$         & 0.962 & 3.662 & 15.269 \\
$5\times10^{-5}$  & 0.981 & 3.759 & 20.489 \\
$10^{-5}$         & 0.989 & 3.689 & 22.976 \\
$5\times10^{-6}$  & 0.986 & 3.650 & 24.098 \\
$10^{-6}$         & 0.994 & 3.701 & 26.718 \\
 \hline
\end{tabular}%
}}
\hspace{5mm}
\subfloat[$(\lambda_0, \lambda_1) = (2.5, 1)$, $N_1^\ast = 2.911$]{
\resizebox{0.45\textwidth}{!}{%
\begin{tabular}{|c|ccc|}
\hline
$\alpha(=\beta)$ & $PCS$ & $\mathbb{E}(N_{1,n})$ & $ASN$ \\ \hline
$10^{-3}$         & 0.952 & 2.996 & 12.614 \\
$5\times10^{-5}$  & 0.977 & 3.093 & 16.832 \\
$10^{-5}$         & 0.986 & 3.204 & 19.170 \\
$5\times10^{-6}$  & 0.990 & 2.991 & 19.950 \\
$10^{-6}$         & 0.989 & 3.104 & 21.790 \\
 \hline
\end{tabular}%
}}
\end{table*}

\subsection{Asymmetric Laplace Distributions (Adaptive SPRT)}

To evaluate performance under skewed and asymmetric settings, we consider the Asymmetric Laplace family \citep{ALD2001}
\[
f(x;m,\lambda,\kappa)
= \frac{\lambda}{\kappa + \kappa^{-1}}
\begin{cases}
\exp\!\left(\dfrac{\lambda}{\kappa}(x - m)\right), & x < m, \\[6pt]
\exp\!\left(-\lambda\kappa(x - m)\right), & x \ge m,
\end{cases}
\]
where $m$ is a location parameter, $\lambda > 0$ is a scale parameter, and $\kappa > 0$ governs the degree of asymmetry.

A variety of contrasting parameter pairs $(m_0,\lambda_0,\kappa_0)$ and $(m_1,\lambda_1,\kappa_1)$ are examined. Across all configurations, the adaptive SPRT continues to achieve strong $PCS$ performance and small inferior allocations. As expected, $ASN$ decreases monotonically with increasing separation between the distributions. The resulting performance measures are summarized in \autoref{Table3}.

\begin{table*}[h]
\centering
\caption{Simulation results for two Asymmetric Laplace populations with various parameter configurations $(m_0, \lambda_0, \kappa_0)$ and $(m_1, \lambda_1, \kappa_1)$ conducted within the adaptive SPRT framework.}
\label{Table3}

\subfloat[$(m_0, \lambda_0, \kappa_0) = (0.2, 2, 0.7)$, $(m_1, \lambda_1, \kappa_1) = (0, 1, 0.3)$, $N_1^\ast = 2.288$]{
\resizebox{0.45\textwidth}{!}{%
\begin{tabular}{|c|ccc|}
\hline
$\alpha(=\beta)$ & $PCS$ & $\mathbb{E}(N_{1,n})$ & $ASN$ \\ \hline
$10^{-3}$         & 0.844 & 1.936 & 11.701 \\
$10^{-5}$         & 0.920 & 1.996 & 19.366 \\
$5\times10^{-6}$  & 0.949 & 1.978 & 20.703 \\
$10^{-6}$         & 0.955 & 2.020 & 23.325 \\
$10^{-7}$         & 0.964 & 2.060 & 27.126 \\ \hline
\end{tabular}%
}}
\hspace{5mm}
\subfloat[$(m_0, \lambda_0, \kappa_0) = (0.2, 1, 0.8)$, $(m_1, \lambda_1, \kappa_1) = (0, 2, 0.2)$, $N_1^\ast = 4.802$]{
\resizebox{0.45\textwidth}{!}{%
\begin{tabular}{|c|ccc|}
\hline
$\alpha(=\beta)$ & $PCS$ & $\mathbb{E}(N_{1,n})$ & $ASN$ \\ \hline
$10^{-3}$         & 0.943 & 3.259 & 9.819 \\
$10^{-5}$         & 0.985 & 3.493 & 12.693 \\
$5\times10^{-6}$  & 0.989 & 3.416 & 12.720 \\
$10^{-6}$         & 0.994 & 3.364 & 13.489 \\
$10^{-7}$         & 0.995 & 3.511 & 14.804 \\ \hline
\end{tabular}%
}}

\vspace{4mm}

\subfloat[$(m_0, \lambda_0, \kappa_0) = (0.4, 1, 0.6)$, $(m_1, \lambda_1, \kappa_1) = (0, 1, 0.2)$, $N_1^\ast = 4.576$]{
\resizebox{0.45\textwidth}{!}{%
\begin{tabular}{|c|ccc|}
\hline
$\alpha(=\beta)$ & $PCS$ & $\mathbb{E}(N_{1,n})$ & $ASN$ \\ \hline
$10^{-3}$         & 0.893 & 2.793 & 15.669 \\
$10^{-5}$         & 0.959 & 2.890 & 24.958 \\
$5\times10^{-6}$  & 0.969 & 2.812 & 26.276 \\
$10^{-6}$         & 0.971 & 3.005 & 29.531 \\
$10^{-7}$         & 0.975 & 3.046 & 33.611 \\ \hline
\end{tabular}%
}}
\hspace{5mm}
\subfloat[$(m_0, \lambda_0, \kappa_0) = (0, 2, 0.7)$, $(m_1, \lambda_1, \kappa_1) = (0.2, 2, 0.3)$, $N_1^\ast = 2.774$]{
\resizebox{0.45\textwidth}{!}{%
\begin{tabular}{|c|ccc|}
\hline
$\alpha(=\beta)$ & $PCS$ & $\mathbb{E}(N_{1,n})$ & $ASN$ \\ \hline
$10^{-3}$         & 0.940 & 2.323 & 9.663 \\
$10^{-5}$         & 0.975 & 2.492 & 14.553 \\
$5\times10^{-6}$  & 0.986 & 2.539 & 15.371 \\
$10^{-6}$         & 0.989 & 2.571 & 17.121 \\
$10^{-7}$         & 0.992 & 2.514 & 18.929 \\ \hline
\end{tabular}%
}}
\end{table*}

\begin{Remark}
    $N_1^\ast \approx \frac{1}{2}\!\left(\frac{\sigma_x^{2}}{\eta_x^{2}} + \frac{\sigma_y^{2}}{\eta_y^{2}}\right)$ is the limit of $\mathbb{E}(N_{1,n})$, i.e., the expression for finite increasing limit of applications to the inferior treatment. For adaptive SPRT, in all the three sets of the above tables, we have found that $\mathbb{E}(N_{1,n})$ conforms to $N_1^\ast$ as $(\alpha, \beta)$ decreases. There may be small fluctuations due to sampling error but overall $\mathbb{E}(N_{1,n})$ goes close to $N_1^\ast$.
\end{Remark}

\subsection{Comparison with classical SPRT}

For comparison, we also examine the classical SPRT under the Normal distribution, where it is naturally applicable. It utilizes the same threshold pair $(a,b)$, i.e., same pair of $(\alpha, \beta)$ but employs deterministic alternating sampling from the two populations, without any adaptive allocation. The test statistic is $Z_n = \sum_{i=1}^n \log\!\left( \frac{f_1(X_i)}{f_0(X_i)} \right)$, and the sampling continues until $Z_n$ crosses one of the stopping boundaries.

Although the classical SPRT often achieves reasonably small $ASN$, it necessarily allocates a substantial number of samples to the inferior population. This stands in sharp contrast with the adaptive SPRT, which significantly curtails inferior allocations by design. The numerical results (\autoref{Table4}) show that the $PCS$ of classical SPRT is higher than that of our adaptive SPRT method, although both methods are efficient. 

\begin{table*}[ht]
\centering
\caption{Simulation results for two Normal populations with distinct mean pairs $(\mu_{0}, \mu_{1})$ and common variance $(\sigma_{0}^2, \sigma_{1}^2) = (1, 1)$ conducted within the classical SPRT framework.}
\label{Table4}
\resizebox{\textwidth}{!}{%
\begin{tabular}{| l | cc | cc | cc | cc | cc |}
\hline
& \multicolumn{2}{c|}{$(\mu_0, \mu_1) = (0.1, 0)$} 
& \multicolumn{2}{c|}{$(\mu_0, \mu_1) = (0.2, 0)$}
& \multicolumn{2}{c|}{$(\mu_0, \mu_1) = (0.3, 0)$}
& \multicolumn{2}{c|}{$(\mu_0, \mu_1) = (0.4, 0)$}
& \multicolumn{2}{c|}{$(\mu_0, \mu_1) = (0.5, 0)$} \\
\cline{2-11}
$\alpha(=\beta)$ & $PCS$ & $ASN$ 
           & $PCS$ & $ASN$ 
           & $PCS$ & $ASN$ 
           & $PCS$ & $ASN$ 
           & $PCS$ & $ASN$ \\ 
\hline
$10^{-2}$ & 0.989 & 928.385 & 0.991 & 231.763 & 0.990 & 104.929 & 0.992 & 57.987 & 0.989 & 38.152 \\
$10^{-3}$ & 0.999 & 1370.521 & 1.000 & 346.063 & 1.000 & 155.557 & 0.999 & 89.838 & 0.998 & 56.901 \\
$10^{-4}$ & 0.999 & 1867.227 & 1.000 & 462.753 & 1.000 & 206.995 & 0.999 & 117.981 & 1.000 & 76.770 \\
$10^{-5}$ & 1.000 & 2335.468 & 1.000 & 571.699 & 1.000 & 265.242 & 1.000 & 146.960 & 1.000 & 95.605 \\
\hline
\end{tabular}%
}
\end{table*}


\begin{Remark}
    Regarding $PCS$ of adaptive SPRT in Tables \ref{Table1}, \ref{Table2} and \ref{Table3}, we have seen that obtained $PCS$ is little lower than $1-\alpha (= 1-\beta)$, though it is found that $PCS$ tends to $1$ as $ASN$ increases (conforming that adaptive SPRT is efficient). In classical SPRT (\autoref{Table4}), the value of $PCS$ well coincide with $1-\alpha (= 1-\beta)$. Effective $ASN$ of adaptive SPRT is less than that of classical SPRT by the finite amount $N_1^\ast$. Hence, though being efficient, adaptive SPRT gives little higher value of probability of incorrect selection.
\end{Remark}

    

\subsection{Summary}

Across all distributional regimes, the adaptive SPRT maintains high $PCS$ while drastically reducing sampling from the inferior population. $ASN$ decreases systematically with increasing signal strength, and the procedure remains stable across symmetric, discrete, and skewed scenarios. In the Normal case, the classical SPRT underscores the ethical and operational advantages of the adaptive allocation mechanism while retaining comparable inferential accuracy, the adaptive SPRT dramatically reduces the expected inferior sample size. These findings collectively highlight the strong practical merits of the adaptive SPRT framework in sequential decision-making problems.

\section{Real Data Analysis} \label{Section 5}

To illustrate the practical applicability and effectiveness of the proposed adaptive SPRT method, we analyze the following datasets.

\subsection{Pregabalin Drug Trial for Postherpetic Neuralgia (PHN)} \label{Pregabalin drug data set}

To exemplify the practical utility of the proposed adaptive SPRT, we consider the randomized, double-blind, placebo-controlled clinical trial to appraise the potency of the drug Pregabalin for the treatment of postherpetic neuralgia (PHN), conducted by \citet{Pregabalin}. PHN is a chronic neuropathic pain condition that persists following herpes zoster infection, and the primary efficacy endpoint of the study was the mean pain score at the end of the treatment period. A total of $173$ patients were randomized to receive either Pregabalin or placebo. The reported endpoint mean pain scores were $3.60$ for the Pregabalin group and $5.29$ for the placebo group, demonstrating significantly greater pain reduction in the Pregabalin arm. Owing to its importance in illustrating response-adaptive allocation procedures, this clinical trial has also been discussed by \citet{PhaseIII2008} as a representative example of adaptive Phase III clinical trial designs.

As the individual patient-level data are unavailable, we use the published summary statistics to characterize the underlying response distributions. To adapt this continuous-response setting to the proposed adaptive SPRT framework developed in Sections \ref{Section 2}--\ref{Section 3}, the responses from the Pregabalin and placebo groups are modeled as independent Normal distributions, with means and standard deviations taken from the reported endpoint pain scores. Since lower pain scores indicate superior treatment efficacy, whereas the proposed adaptive sequential framework is developed under the convention that larger location parameters correspond to more effective treatments, we transform the response variable considering the negative of the observed pain scores. Specifically, we use the following values as the true parameters:
\begin{itemize}
    \item Pregabalin (treatment $0$): mean $\theta_0 = -3.60$, sd $\sigma_0 = 2.25$,
    \item Placebo (treatment $1$): mean $\theta_1 = -5.29$, sd $\sigma_1 = 2.20$.
\end{itemize} These parameter estimates are regarded as the face values of the true parameters, and the original Pregabalin trial is redesigned under the proposed adaptive SPRT framework. The redesigned trial is then independently simulated for $1000$ replications. Accordingly, the corresponding empirical values of $PCS$, $\mathbb{E}(N_{1,n})$ and $ASN$ are computed and recorded.

\autoref{Table5} summarizes the empirical performance of the redesigned trial under different choices of the error probabilities. The empirical results indicate that the proposed adaptive SPRT consistently achieves a high probability of correct selection while maintaining a finite expected number of allocations to the inferior treatment. Moreover, the empirical values of $\mathbb{E}(N_{1,n})$ remain close to, but below the theoretical limiting value $N_1^\ast=11.504$, thereby corroborating the theoretical findings of Section \ref{Section 3}. These results demonstrate that the proposed adaptive SPRT effectively balances ethical considerations and inferential efficiency by substantially reducing patient exposure to the clinically inferior placebo treatment without sacrificing sequential testing performance.

\begin{table}[!htbp]
\centering
\caption{Results for Pregabalin trial dataset. The theoretical limiting value is $N_1^\ast=11.504$.}
\label{Table5}

\begin{tabular}{|c|ccc|}
\hline
$\alpha(=\beta)$ & $PCS$ & $\mathbb{E}(N_{1,n})$ & $ASN$\\
\hline
$10^{-3}$        & 0.942 & 6.067 & 28.815\\
$5\times10^{-5}$ & 0.972 & 6.863 & 41.859\\
$10^{-5}$        & 0.984 & 6.453 & 45.843\\
$5\times10^{-6}$ & 0.977 & 6.871 & 48.785\\
$10^{-6}$        & 0.988 & 6.699 & 54.628\\
\hline
\end{tabular}

\end{table}

\subsection{Epileptic Seizure Data}

To further illustrate the practical utility of the proposed adaptive SPRT for count data, we consider the epileptic seizure dataset arising from the randomized, double-blind, placebo-controlled clinical trial conducted by \citet{Leppik1985}, in which Progabide was investigated as an adjunctive therapy and compared with placebo, for patients with partial epilepsy. This dataset has subsequently been reported by \citet{Thall1990} and has become a standard benchmark for illustrating statistical methodologies involving count responses.

The dataset consists of $59$ patients randomly assigned to either the Progabide or placebo group. For each patient, seizure counts were recorded at four successive post-treatment follow-up visits. Since a lower seizure count indicates superior treatment efficacy, the treatment with the smaller mean seizure count is regarded as the superior treatment. Although the original dataset is longitudinal, the objective of the present analysis is not to model the dependence among repeated observations but rather to illustrate the implementation of the proposed adaptive SPRT for count data. Accordingly, the seizure counts across the four follow-up visits are summed to obtain the total post-treatment seizure count for each patient. The resulting patient-specific total seizure counts in the Progabide and placebo groups are modeled by Poisson distributions, with the corresponding Poisson means estimated by the sample mean total seizure counts in the respective groups. These parameter estimates are regarded as the true parameter values, and the original clinical trial is redesigned under the proposed adaptive SPRT framework. The operating characteristics of the redesigned trial are then investigated, with particular emphasis on reducing allocations to the inferior treatment while maintaining reliable inferential performance.

The treatment-specific Poisson parameters are considered as the underlying true parameter values for the redesigned experiment. The estimated mean seizure counts are:
\begin{itemize}
    \item Progabide (treatment $0$): $\lambda_0 = 31.8387$,
    \item Placebo (treatment $1$): $\lambda_1 = 34.3929$.
\end{itemize}

Although $\lambda_0 < \lambda_1$, treatment $0$ is designated as the superior treatment because lower seizure counts indicate better clinical efficacy.

Using these estimated parameter values, we redesign the trial and implement the proposed adaptive SPRT for several choices of the error probabilities $\alpha=\beta$. For each configuration, $1000$ independent replications are performed. In this context, the empirical values of $PCS$, $E(N_{1,n})$ and $ASN$ are computed and reported.

The results are presented in \autoref{Table6}. It is observed that the empirical values of $E(N_{1,n})$ remain close to the theoretical limiting value $N_1^{*} \hspace{2mm} (20.294)$, thereby providing strong empirical support for the theoretical approximation derived in \autoref{Theorem 3.1}. Furthermore, the $PCS$ increases steadily as $\alpha$ decreases, indicating improved reliability of the adaptive decision rule. The $ASN$ increases owing to the more stringent stopping boundaries. Nevertheless, the expected number of allocations to the clinically inferior placebo treatment fluctuates around the finite theoretical limit, demonstrating that the proposed adaptive SPRT established in Section \ref{Section 3} effectively reduces patient exposure to the less effective treatment while maintaining reliable inferential performance in a realistic clinical trial setting.

\begin{table}[!htbp]
\centering
\caption{Results for the epilepsy (Progabide--Placebo) trial dataset. The theoretical limiting value is $N_1^\ast=20.294$.}
\label{Table6}

\begin{tabular}{|c|ccc|}
\hline
$\alpha(=\beta)$ & $PCS$ & $\mathbb{E}(N_{1,n})$ & $ASN$\\
\hline
$10^{-3}$        & 0.911 & 18.906 & 87.526\\
$5\times10^{-5}$ & 0.954 & 19.240 & 119.344\\
$10^{-5}$        & 0.975 & 19.978 & 136.637\\
$5\times10^{-6}$ & 0.977 & 19.841 & 144.216\\
$10^{-6}$        & 0.984 & 20.035 & 162.009\\
\hline
\end{tabular}

\end{table}

\section{Concluding Remarks} \label{Section 6}

This work, from a broader perspective, addresses a longstanding gap in adaptive sequential analysis — the lack of an explicit, finite-form quantification of ethical performance in allocation-driven testing procedures. The proposed likelihood ratio–based adaptive sequential rule for testing
$H_0: (f_0, f_1)$ vs $H_1: (f_1, f_0)$,
provides a direct analytical expression for the expected number of allocations to the less effective treatment and establishes its finiteness. The procedure also has significant implications for experimental design in practice. In clinical trials or adaptive testing problems where sample collection incurs real-world ethical or economic costs, having an explicit upper bound on the expected number of allocations to the inferior option offers clear interpretability for regulators and practitioners. We have shown a very important application of the procedure by adjusting the classical SPRT based on the proposed adaptive rule. The results demonstrate that our procedure retains the asymptotic efficiency of the classical SPRT, hence establishing its statistical control similar to the classical SPRT, along with the ethical efficiency, which is lacking in the classical SPRT. The real data analyses further enrich the study by providing empirical evidence that complements the theoretical developments and simulation findings in a realistic setting.

The study also opens up several directions for further research. In particular, when the parameters are completely unknown (composite hypotheses), it is of interest to design or modify the procedure to achieve a parameter-free decision rule while preserving the essential goals of ethical and inferential efficiency. In this paper, we have achieved this property to some extent (as mentioned in Remark \ref{Remark 6}) for the symmetric case
$H_0: \theta = \theta_0$ vs $H_1: \theta = - \theta_0$.
It is to be seen if the results can also be extended to settings involving more than two treatments, where similar finiteness and efficiency properties are expected to hold.



\bibliography{references}

\end{document}